\DeclareMathOperator{\im}{im}
\DeclareMathOperator{\spa}{span}
\DeclareMathOperator{\sat}{sat}
\newtheorem{theorem}{Theorem}
\newtheorem{lemma}[theorem]{Lemma}
\newtheorem{definition}[theorem]{Definition}
\newtheorem{corollary}[theorem]{Corollary}
\newtheorem{exmp}{Example}[section]
\newtheorem{remark}[theorem]{Remark}
\title{\LARGE \bf
A graphic condition for the stability of dynamical distribution networks with flow constraints
}
\author{J. Wei$^{1}$ and A.J. van der Schaft$^{2}$
\thanks{*The work of the first author is supported by Chinese Scholarship Council.The research of the second author leading to these
results has received funding from the EU 7th Framework Programme [FP7/2007-2013] under grant agreement no. 257462 HYCON2 Network of
Excellence.}
\thanks{$^{1,2}$J.Wei and A.J. van der Schaft are with Johann Bernoulli Institute for Mathematics and Computer Science, University of Groningen, 9700 AK, the Netherlands.
        {\tt\small J.Wei@rug.nl }
        {\tt\small A.J.van.der.Schaft@rug.nl}}%
}
\begin{document}

\maketitle
\thispagestyle{empty}
\pagestyle{empty}

\begin{abstract}
We consider a basic model of a dynamical distribution network, modeled as a directed
graph with storage variables corresponding to every vertex and flow inputs
corresponding to every edge, subject to unknown but constant
inflows and outflows. In \cite{wei2013} we showed how a distributed proportional-integral controller structure, associating with every edge of the graph a controller state, regulates the state
variables of the vertices, irrespective of the unknown constant inflows and
outflows, in the sense that the storage variables converge to the same value
(load balancing or consensus). In many practical cases, the flows on the edges are constrained. The main result of \cite{wei2013} is a sufficient and necessary condition, which only depend on the structure of the network, for load balancing for arbitrary constraint intervals of which the intersection has nonempty interior.
In this paper, we will consider the question about how to decide the steady states of the same model as in \cite{wei2013} with given network structure and constraint intervals.  We will derive a graphic condition, which is sufficient and necessary, for load balancing. This will be proved by a Lyapunov function and the analysis the kernel of incidence matrix of the network. Furthermore, we will show that by modified PI controller, the storage variable on the nodes can be driven to an arbitrary point of admissible set. 
\end{abstract}

\section{INTRODUCTION}
Production-distribution systems form a very important class of systems that
have a large number of applications. In this paper, we pursue an approach
similar to that proposed in \cite{wei2013}. Given the network which is
depicted as a directed graph, we assign a state variable with every vertex of
the graph, and a control input corresponding to flow with every edge, which is
constrained in a given closed interval. Furthermore, the system is open to
environments by some ports, namely some of the vertices serves as terminals,
where an unknown-but-constant flow may enter or leave the network in such a way
that the total sum of inflows and outflows is equal to zero. 

There are many relevant references on this topic. In \cite{depersis}, a class
of cooperative control algorithms is proposed in the context of distribution
network under time-varying exogenous in/outflows. The author dealt with
constraint for control input. However, the constraint intervals are all
symmetric with respect to the origin. In \cite{Blanchini00}, the main
problem is the joint presence of buffer/flow capacity and of the unknown
in/outflows. A discontinuous control strategy is proposed to drive the system
states, whose components are also storage in nodes, into consensus for all
possible unknown in/outflows by using control input that are subject to hard
bounds. In \cite{wei2013}, a sufficient and necessary condition is derived that
with PI controller and arbitrary constraint intervals whose intersection has
nonempty interior, the state variables corresponding to all vertices converge to
consensus if and only if the directed graph is strongly connected and balanced. In \cite{Ren08,jaya1}, a similar model with constraint is considered from physical perspective. 

The control problem to be studied here is to derive a criteria, which only
depends on the structure of network and flow constraints, to decide whether or
not a distributed control structure (the control input corresponding to each
edge only depending on the
difference of state variables of its two endpoints) will make the state
variables associated to all vertices converge to the same value equal to the
average of the initial condition for given constraint intervals and constant
unknown in/outflows. Notice that this control strategy is decentralized.

The organization of this paper is as follows. Some preliminaries and notations
will be given in Section 2. In Section 3, the class of systems and a basic
assumption about flow constraints under study will be introduced. 

The main contribution of this paper resides in Section 4, 5. In Section
4, it will be shown that the state variables associated to all the vertices 
converge to consensus, if and only if the network and flow constraints satisfy
IPC (interior point condition) which will be defined later. Similar to
\cite{Blanchini00}, if the information of desirable state is available, we can
modify the PI controller such that the storage converge to the desirable
point. This will be explained in Section 5. Finally, Section 6 contains the
conclusion.

\section{Preliminaries and notations}

First we recall some standard definitions regarding directed graphs,
as can be found e.g. in \cite{Bollobas98}. A \textit{directed graph}
$\mathcal{G}$ consists of a finite set $\mathcal{V}$ of \textit{vertices}
and a finite set $\mathcal{E}$ of \textit{edges}, together
with a mapping from $\mathcal{E}$ to the set of ordered pairs of
$\mathcal{V}$, where no self-loops are allowed. Thus to any edge
$e\in\mathcal{E}$ there corresponds an ordered pair
$(v,w)\in\mathcal{V}\times\mathcal{V}$
(with $v\not=w$), representing the tail vertex $v$ and the head
vertex $w$ of this edge. An undirected graph $\mathcal{G}^o$ is obtained from
$\mathcal{G}$ by ignoring the orientation of the edges. A cycle in
$\mathcal{G}^o$ is a closed path in which the internal vertices are distinct. An
oriented cycle in $\mathcal{G}$ is a cycle in $\mathcal{G}^o$ with an
orientation assigned by an ordering of the vertices in the cycle. Given an
oriented cycle $\mathcal{C}$, we define the vectorial representation of the
cycle $\mathcal{C}$ as $C$ whose component is given as 
\begin{equation}
C_i  = \left\{ \begin{array}{ll}
0 & \textrm{$e_i\notin\mathcal{C}$}\\
1 & \textrm{$e_i\in\mathcal{C}$ and the orientations agree}\\
-1 & \textrm{$e_i\in\mathcal{C}$ and the orientations disagree},
\end{array}
 \right.\\
\end{equation}
A oriented cycle of a directed graph $\mathcal{G}$ which has an orientation
which agrees with the orientations in the graph is called positive circuit.

A directed graph is completely specified by its \textit{incidence
matrix} $B$, which is an $n\times m$ matrix, $n$ being the
number of vertices and $m$ being the number of edges, with $(i,j)^{\text{th}}$
element equal to $1$ if the $j^{\text{th}}$ edge is towards vertex
$i$, and equal to $-1$ if the $j^{\text{th}}$ edge is originating from
vertex $i$, and $0$ otherwise. 
A directed graph is {\it strongly connected} if it is
possible to reach any vertex starting from any other vertex by traversing edges 
following their directions. A directed graph $\mathcal{G}$ is called {\it weakly
connected} if $\mathcal{G}^o$ is connected. A digraph is weakly connected if
and only if $\ker B^T = \spa \mathds{1}_n$. Here $\mathds{1}_n$ denotes the
$n$-dimensional vector with all elements equal to $1$. We omit the subscript if
the dimension of the vector is unambiguous from the context. A digraph that is
not weakly connected falls apart into a number of weakly connected subgraphs,
called the weakly connected components. The number of weakly connected
components is equal to $\dim \ker B^T$. A subgraph
$\mathcal{T}\subseteq\mathcal{G}^o$ is a tree if it is connected and
acyclic, and a spanning tree if it is a tree and contains all the vertices of
$\mathcal{G}^o$.

Given a graph, we define its \textit{vertex space} as the vector space of all
functions from $\mathcal{V}$ to some linear space $\mathcal{R}$. In the rest of
this paper we will take for simplicity $\mathcal{R}=\mathbb{R}$, in which case
the vertex space can be identified with $\mathbb{R}^{n}$. Similarly, we define
its \textit{edge space} as the
vector space of all functions from $\mathcal{E}$ to $\mathcal{R} = \mathbb{R}$,
which can be identified with $\mathbb{R}^{m}$. In this way, the incidence matrix
$B$ of the graph can be also regarded as the matrix representation of a linear
map from the edge space $\mathbb{R}^m$ to the vertex space $\mathbb{R}^n$.

A cone in $\mathbb{R}^n$ is a closed subset $K$ such that $K\cap\{-K\}=\{0\}$
and $\alpha K+\beta K \subseteq K$ for all $\alpha, \beta \geq 0$. A cone is
generated by a set of vectors in $K$ if any $x\in K$ can be written as a linear
combination of vectors in the set, using only nonnegative coefficients. The
dimension of $K$ is the number of elements in a minimal generating set.

\noindent
{\bf Notation}: For $a,b\in\mathbb{R}^m$ the notation $a \leqslant b$ will
denote element-wise inequality $a_i \leq b_i,\,i=1,\ldots,m$. For $a_i \leq
b_i,\,i=1,\ldots,m$ the multidimensional
saturation function
$\sat(x\,;a,b): \mathbb{R}^m\rightarrow\mathbb{R}^m$ is defined as
\begin{equation}
\sat(x\,;a,b)_i  = \left\{ \begin{array}{ll}
a_i & \textrm{if $x_i< a_i,$}\\
x_i & \textrm{if $a_i\leq x_i\leq b_i,$}\\
b_i & \textrm{if $x_i> b_i$},
\end{array}
\, i=1,\ldots,m. \right.
\end{equation}
Its integral $S(x\,;a,b):\mathbb{R}^m\rightarrow\mathbb{R}^m$ is defined as 
\begin{equation}
 S(x\,;a,b)_i = \int_{0}^{x_i}\sat(y\,;a_i,b_i)dy.
\end{equation}

\section{A dynamic network model with input constraints}

Let us consider the following dynamical system defined on the graph (\cite{schaftNECSYS10,schaftCDC08,schaftSIAM})
\begin{equation}\label{system1}
\begin{array}{rcl}
\dot{x} & = & Bu + Ed, \quad x \in \mathbb{R}^n, u \in \mathbb{R}^m, \quad d \in
\mathbb{R}^k \\[2mm]
y & = & B^T \frac{\partial H}{\partial x}(x), \quad y \in \mathbb{R}^m,
\end{array}
\end{equation}
where $H: \mathbb{R}^n \to \mathbb{R}$ is a differentiable function, and
$\frac{\partial H}{\partial
x}(x)$ denotes the column vector of partial derivatives of $H$. Here the
$i^{\text{th}}$
element $x_i$ of the state vector $x$ is the state variable
associated to the $i^{\text{th}}$ vertex, while $u_j$ is a flow input variable
associated to the
$j^{\text{th}}$ edge of the graph. And $E$ is an $n \times k$ matrix
whose columns consist of exactly one entry equal
to $1$ (inflow) or $-1$ (outflow), while the rest of the elements is zero. Thus
$E$ specifies the $k$ terminal vertices where flows can enter or leave the
network (\cite{vanderschaftmaschkearchive}).
System (\ref{system1}) defines a
port-Hamiltonian system (\cite{vanderschaftmaschkearchive,
vanderschaftbook}), satisfying the energy-balance
\begin{equation}
\frac{d}{dt}H = u^Ty+\frac{\partial^T H}{\partial x}(x)Ed.
\end{equation}

As explained in \cite{wei2013}, when $d\neq0$, the proportional control will
not be sufficient to reach load balancing. Hence we consider a
proportional-integral (PI) controller given by the dynamic output feedback
\begin{equation}\label{unconstrained-controller}
 \begin{aligned}
  \dot{x}_c & = RB^T\frac{\partial H}{\partial x}(x)\\
u & = -RB^T\frac{\partial H}{\partial x}(x)-R\frac{\partial H_c}{\partial
x_c}(x_c)
 \end{aligned}
\end{equation}
Then the closed-loop can be represented as the following
port-Hamiltonian system
\begin{equation}\label{closedloop}
 \begin{bmatrix} \dot{x} \\[2mm] \dot{x}_c \end{bmatrix} =
\begin{bmatrix} -BRB^T & -BR \\[2mm] RB^T & 0 \end{bmatrix}
\begin{bmatrix} \frac{\partial H}{\partial x}(x) \\[2mm] \frac{\partial
H_c}{\partial x_c}(x_c) \end{bmatrix} +
\begin{bmatrix} E \\[2mm] 0 \end{bmatrix} d,
\end{equation}
with $H_{tot}(x,x_c)=H(x)+H_c(x_c)$.

Since $\mathds{1}^T\dot{x}=\mathds{1}^TEd$, the system has a steady state if
and only if $\mathds{1}^TEd=0$. For any weakly connected graph with
$n$ vertices, $Ed\in\im B$, for all $Ed$ such that $\mathds{1}^TEd=0$. Suppose
now
the constant disturbance $\bar{d}$ and satisfies the
{\it matching condition}, i.e. there exists a 
controller state $\bar{x}_c$ such that
\begin{equation}\label{matching}
E \bar{d} = B\frac{\partial H_c}{\partial x_c}(\bar{x}_c).
\end{equation}


In many practical cases, the elements of the vector of flow inputs
$u \in
\mathbb{R}^m$
corresponding to the edges of the graph will be {\it constrained}, that is
\begin{equation}
 u \in\mathcal{U}:=\{u\in\mathbb{R}^m\mid u^-\leqslant u\leqslant u^+\}
\end{equation}
for certain vectors $u^-$ and $ u^+$ satisfying $u^-_i<
u^+_i, i=1,\ldots,m$. In our previous paper \cite{wei2013} we focused on
the cases where $u^-_i\leqslant0<u^+_i,i=1,2,\ldots,m.$ or $0\leq u^-_i<u^+_i,i=1,2,\ldots,m$ of which the intersection has nonempty interior. In the present paper we
consider {\it arbitrary}
constraint intervals, necessitating a novel approach to the problem.

Thus we consider a general constrained version of the PI controller given as
\begin{equation}\label{PIconstrained}
\begin{array}{rcl}
\dot{x}_c & = & Ry ,\\[2mm]
u & = &\sat\big(-Ry - R\frac{\partial H_c}{\partial x_c}(x_c)\,;u^-,u^+\big)
\end{array}
\end{equation}
For simplicity of exposition we consider throughout the rest of this paper the
identity gain matrix $R=I$.  Furthermore we throughout assume that
the Hessian matrix of Hamiltonian $H(x)$ is positive definite for any $x$ and
we only consider $H_c(x_c) = \frac{1}{2} \| x_c \|^2$.
Then the system (\ref{system1}) with constraint PI controller
(\ref{PIconstrained}) is given as
\begin{equation}\label{closedloop-sat-disturb}
\begin{aligned}
\dot{x} & =  B\sat\big(-B^T\frac{\partial H}{\partial
x}(x)-x_c\,;u^-,u^+\big)+E\bar{d},
\\[2mm]
\dot{x}_c & =  B^T\frac{\partial H}{\partial x}(x),
\end{aligned}
\end{equation}

\begin{remark}
For arbitrary diagonal positive definitive gain matrix $R$, we can use as
Lyapunov function instead of (\ref{lyapunov}) the expression 
\begin{equation}
 V(x,x_c)=\mathds{1}^{T}R^{-1} S\big(-RB^{T}\frac{\partial H}{\partial
x}(x)-Rx_c\,;u^-,u^+\big)+H(x)
\end{equation}
 as Lyapunov function and obtain the same conclusions.
\end{remark}

The constrained system is different from the one in (\cite{Ren08}) where the saturation is added separately on the proportional and integral part of the controller.

In the rest of this section, we will first show how the disturbance can be {\it
absorbed} into the constraint intervals. Indeed, for
any $\eta\in\mathbb{R}^n$, we have the identity
\begin{equation}\label{identity}
\sat(x-\eta\,;u^-,u^+)+\eta=\sat(x\,;u^-+\eta,u^++\eta).
\end{equation}
Therefore for an in/out flow $\bar{d}$ satisfying the matching
condition, i.e., such that there exists $\bar{x}_c$ with $B\bar{x}_c=E\bar{d}$,
we can rewrite system (\ref{closedloop-sat-disturb}) as
\begin{equation}\label{disturbance_in_const}
 \begin{aligned}
  \dot{x} & = B\sat(-B^T\frac{\partial H}{\partial
x}(x)-x'_c\,;u^-+\bar{x}_c,u^++\bar{x}_c), \\
\dot{x}'_c & = B^T\frac{\partial H}{\partial
x}(x),
 \end{aligned}
\end{equation}
where $x'_c=x_c-\bar{x}_c$.
It follows that, without loss of generality, we can restrict ourselves to the
study of the closed-loop system
\begin{equation}\label{closedloop-sat}
\begin{aligned}
\dot{x} & =  B\sat\big(-B^T\frac{\partial H}{\partial
x}(x)-x_c\,;u^-,u^+\big),
\\[2mm]
\dot{x}_c & =  B^T\frac{\partial H}{\partial x}(x).
\end{aligned}
\end{equation}
for general $u^-$ and $ u^+$ with $u^-_i\leq u^+_i, i=1,\ldots,m$.

Next, we will show how the orientation can be made compatible with the flow
constraints.

Any {\it bi-directional} edge whose constraint interval satisfies
$u^-_i<0<u^+_i$, it can be divided into {\it two uni-directional} edges with
constraint intervals $[u^-_i,0], [0,u^+_i]$ respectively, and the same
orientation. This follows from 
\begin{equation}
 \sat(u_i;u^-_i,u^+_i)=\sat(u_i;u^-_i,0)+\sat(u_i;0,u^+_i)
\end{equation}
for any $u^-_i<0<u^+_i$.

Furthermore, we may change the {\it orientation} of
some of the edges of the graph at will; replacing the corresponding columns
$b_i$ of the
incidence matrix $B$ by $-b_i$. By the identity
\begin{equation}\label{identity1}
\sat(-x\,;u_i^-,u_i^+)=-\sat(x\,;-u_i^+,-u_i^-),
\end{equation}
we may therefore assume {\it without loss of generality} that the orientation of
the graph
is
chosen such that
\begin{equation}
u^+_i>0,\, i=1,2,\ldots,m.
\end{equation}

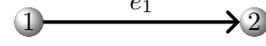
\begin{figure}[ht]
\begin{center}
\begin{tikzpicture}
\tikzstyle{EdgeStyle}    = [thin,double= black,
                            double distance = 0.5pt]
\useasboundingbox (0,0) rectangle (4cm,0.5cm);
\tikzstyle{VertexStyle} = [shading         = ball,
                           ball color      = white!100!white,
                           minimum size = 20pt,%
                           inner sep       = 1pt,]
\Vertex[style={minimum
size=0.2cm,shape=circle},LabelOut=false,L=\hbox{$1$},x=1cm,y=0.2cm]{v1}
\Vertex[style={minimum
size=0.2cm,shape=circle},LabelOut=false,L=\hbox{$2$},x=4cm,y=0.2cm]{v2}
\draw
(v1) edge[->,>=angle 90,thin,double= black,double distance = 0.5pt]
node[above]{$e_1$} (v2);
\end{tikzpicture}
\caption{Illustrative graph}\label{figure_ex1}
\end{center}
\end{figure}

\begin{exmp}
Consider the graph in Fig.\ref{figure_ex1}, where the constraint
interval for edge $e_1$ is $[-2,-1]$. The network
is equivalent to the network where the edge direction is reversed from $v_2$ to
$v_1$ 
while the constraint interval is modified into $[1,2]$.
\end{exmp}
By dividing bi-directional edges into uni-directional ones and changing
orientations afterwards, we can therefore assume that
\begin{equation}\label{assumption1}
 u^+_i\geq u^-_i \geq0, \,\, i=1,2,\ldots,m.
\end{equation}
where the two equality signs do not hold at the same time.

Assumption (\ref{assumption1}) will be standing throughout the rest of the
paper. In general, we will say that orientation of the the graph is {\it
compatible with the flow
constraints} if (\ref{assumption1}) hold.

\section{Convergence conditions for the closed-loop dynamics with general flow
constraints}

The following lemma is a cornerstone of this paper.

\begin{lemma}( \cite{gatermann2005},Lemma 3.2.9 in
\cite{gatermann2002},\cite{othmer1981})\label{positive circuit}
 The set of minimal generators of the convex polyhedral cone
$\ker B\cap\mathbb{R}_{\geq0}^m$ is composed of positive circuits. 
\end{lemma}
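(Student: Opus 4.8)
The plan is to identify the minimal generators of the cone $K:=\ker B\cap\mathbb{R}_{\geq0}^m$ with the rays spanned by the vectorial representations of positive circuits, via the extreme-ray description of pointed polyhedral cones. First I would observe that $K$ is a \emph{pointed} polyhedral cone: it is the intersection of the polyhedral cone $\mathbb{R}_{\geq0}^m$ with the subspace $\ker B$, and if $v,-v\in K$ then $v\geq0$ and $-v\geq0$, so $v=0$. It is a standard fact that a pointed polyhedral cone is generated by its finitely many extreme rays and that every minimal generating set consists of exactly one nonzero vector chosen from each extreme ray. Hence it suffices to show that the extreme rays of $K$ are precisely the rays $\mathbb{R}_{\geq0}C$, where $C$ is the vectorial representation of a positive circuit. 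Throughout I use that $v\in K$ means $v\geq0$ and $Bv=0$, i.e.\ $v$ is a nonnegative flow conserved at every vertex, and that for a positive circuit all nonzero entries of $C$ equal $+1$, so that $C\in K$ (since $BC=0$ for the vectorial representation of any oriented cycle).

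\emph{Every positive circuit spans an extreme ray of $K$.} Let $C$ be the vectorial representation of a positive circuit whose support is the edge set of a simple cycle $\mathcal{C}$ on vertices $w_0,w_1,\dots,w_{\ell-1},w_0$, with edge $f_j$ oriented $w_j\to w_{j+1}$. Suppose $C=v_1+v_2$ with $v_1,v_2\in K$. Since $v_1,v_2\geq0$, their supports lie inside $\mathrm{supp}(C)=\{f_0,\dots,f_{\ell-1}\}$, so the only edges of the graph carrying $v_1$-flow at $w_j$ are $f_{j-1}$ (incoming) and $f_j$ (outgoing); conservation at $w_j$ then gives $v_1(f_{j-1})=v_1(f_j)$ for all $j$, so $v_1$ is constant on $\mathcal{C}$, say equal to $\lambda_1\geq0$, whence $v_1=\lambda_1 C$, and likewise $v_2=\lambda_2 C$. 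Therefore $C$ cannot be split into non-proportional summands from $K$, so $\mathbb{R}_{\geq0}C$ is an extreme ray.

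\emph{Every extreme ray of $K$ is spanned by a positive circuit.} The key combinatorial step is: \emph{every nonzero $v\in K$ has a support containing the edge set of a positive circuit.} Indeed, pick an edge $e_1=(w_0,w_1)$ with $v_{e_1}>0$; by conservation at $w_1$ the outgoing flow there equals the incoming flow $\geq v_{e_1}>0$, so some edge $e_2=(w_1,w_2)$ has $v_{e_2}>0$; iterating yields a walk all of whose edges lie in $\mathrm{supp}(v)$ and are traversed along their graph orientation, and since $\mathcal{V}$ is finite some vertex repeats, producing a positive circuit $\mathcal{C}$ with $\mathrm{supp}(C)\subseteq\mathrm{supp}(v)$. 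Now let $\mathbb{R}_{\geq0}v$ be an extreme ray and $C$ such a circuit. For $\varepsilon>0$ small enough, $v-\varepsilon C\geq0$ and $B(v-\varepsilon C)=0$, so $v-\varepsilon C\in K$ and $v=(v-\varepsilon C)+\varepsilon C$; extremality forces both summands to be nonnegative multiples of $v$, and since $C\neq0$ this gives $v=\mu C$ with $\mu>0$.

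Combining the two directions, the extreme rays of $K$ are exactly the rays of positive circuits, so a minimal generating set of $K$ is obtained by taking one representative (e.g.\ the $0/1$ vectorial representation) from each positive circuit; distinct simple cycles have distinct edge supports, so these representatives are pairwise non-proportional and the set has no redundancy. I expect the main obstacle to be the combinatorial claim in the last paragraph — making the flow-tracing argument precise — while the convex-geometry facts (pointedness and generation by extreme rays) are standard and can be quoted.
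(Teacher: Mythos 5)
Your proof is correct. Note, however, that the paper does not prove this lemma at all: it is imported wholesale from the cited references (Gatermann, Othmer), so there is no in-paper argument to compare against. Your write-up supplies exactly the standard proof of that cited result: pointedness of $K=\ker B\cap\mathbb{R}_{\geq0}^m$, reduction to identifying extreme rays, the conservation-at-each-vertex argument showing a positive circuit cannot be split into non-proportional summands, and the flow-tracing/decomposition step showing the support of any nonzero conserved nonnegative flow contains a directed cycle, which then kills any extreme ray that is not itself a circuit. All steps are sound; the only places needing the care you already flag are (a) stopping the walk at the \emph{first} repeated vertex so the extracted cycle is simple, matching the paper's definition of a cycle with distinct internal vertices, and (b) the observation that a nonnegative summand of $C$ must be supported on $\mathrm{supp}(C)$, which you use correctly. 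So your contribution relative to the paper is to make a black-box citation self-contained, not to diverge from an existing argument.
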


\begin{definition}\label{interior point}
 (Interior Point Condition) Given a directed graph with arbitrary constraints
$[u^-, u^+]$ (maybe not compatible with the orientation), the network will be
said to satisfy the
interior point condition
if there exists a vector $z\in[u^-, u^+]$ such that 
\begin{equation}
 B\sat(z;u^-, u^+)=Bz=0,
\end{equation}
and the set of edges along which the corresponding element of $z$ is an interior
point of the constraint interval contains a spanning tree.
\end{definition}

\begin{remark}
 We will show that interior point condition is independent of the choice of
$\beta\in\ker B$, i.e. a graph $\mathcal{G}$ with constraints $[u^-,u^+]$
satisfies the interior point condition, then $\mathcal{G}$ with constraints
$[u^-+\beta,u^++\beta]$ also satisfies it for any $\beta\in\ker B$. This
problem can be caused in $(\ref{closedloop-sat})$, since for any $\beta\in\ker
B$, system $(\ref{closedloop-sat})$ can be rewritten with new constraints
$[u^-+\beta,u^++\beta]$ and new edge states $x'_c=x_c-\beta$. Indeed,
$z\in[u^-,u^+] \Longleftrightarrow z+\beta\in[u^-+\beta,u^++\beta]$ and
$z_i\in int[u^-_i,u^+_i] \Longleftrightarrow
z_i+\beta_i\in int[u^-_i+\beta_i,u^+_i+\beta_i]$. That verifies our statement.
\end{remark}

\begin{remark}
 For any network with compatible orientation, we can
assume in Definition \ref{interior point} that $z\in\mathbb{R}^{m}_{\geq0}$.
This will be assumed throughout the rest of this paper.
\end{remark}

Basing on the vector $z$ from the interior point condition, we can divide the
edge set $\mathcal{E}$ into the following four subsets
\begin{equation}
 \begin{aligned}
  \mathcal{E}_0(z; u^-, u^+) & = \{ e_i \mid e_i\in\mathcal{E}, z_i=0\} \\
\mathcal{E}_1(z; u^-, u^+) & = \{ e_i \mid e_i\in\mathcal{E}, z_i>0\}\\
\mathcal{E}_2(z; u^-, u^+) & = \{ e_i \mid e_i\in\mathcal{E}, z_i\in int
[u^-_i,u^+_i]\}\\
\mathcal{E}_3(z; u^-, u^+) & = \{ e_i \mid e_i\in\mathcal{E}, z_i=u^-_i \textrm{
or }
z_i=u^+_i\}
 \end{aligned}
\end{equation}
with
$\mathcal{E}_0\cup\mathcal{E}_1=\mathcal{E}_2\cup\mathcal{E}
_3=\mathcal{E}$. The subgraph
$\mathcal{G}_i=\{\mathcal{V},\mathcal{E}_i\},i=0,1,2,3,$ can be defined
respectively.

\begin{lemma}\label{weakly-strongly connected}corollary
Let $\mathcal{G}$ be a weakly connected directed graph with compatible
constraint intervals
$[u^-, u^+]$. Then $\mathcal{G}$ is strongly
connected if it satisfies the interior point condition.
\end{lemma}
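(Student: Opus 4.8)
The plan is to show strong connectivity by exhibiting, for any two vertices, a directed path between them, exploiting the interior point vector $z$ and Lemma \ref{positive circuit}. First I would invoke the interior point condition to obtain $z\in[u^-,u^+]$ with $Bz=0$ and such that the edge set $\mathcal{E}_2(z;u^-,u^+)$ on which $z$ is interior contains a spanning tree $\mathcal{T}$. Since the orientation is compatible, $z\geq 0$, so $z\in\ker B\cap\mathbb{R}^m_{\geq 0}$. By Lemma \ref{positive circuit}, $z$ can be written as a nonnegative combination $z=\sum_k \lambda_k c_k$ of vectorial representations $c_k$ of positive circuits, with $\lambda_k>0$. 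The key observation is that the support of $z$ is exactly $\bigcup_k \mathcal{C}_k$: an edge $e_i$ has $z_i>0$ if and only if it lies on at least one of these positive circuits (no cancellation occurs because all $c_k$ and $\lambda_k$ are nonnegative).

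Next I would handle the edges \emph{outside} the support of $z$, i.e. those in $\mathcal{E}_0$. For such an edge $e_i$, $z_i=0$; but compatibility with the flow constraints, together with the standing assumption that $u^+_i\ge u^-_i\ge 0$ with not both equalities, forces $u^-_i=0<u^+_i$ — otherwise $z_i$ could not equal $0$ while lying in $[u^-_i,u^+_i]$ unless $u^-_i=0$. Actually the more useful point is that every edge of the spanning tree $\mathcal{T}\subseteq\mathcal{E}_2$ has $z_i$ \emph{interior}, hence $z_i>0$ (as $z_i\ge u^-_i\ge 0$ and $z_i$ is not an endpoint, while $u^+_i>0$), so $\mathcal{T}\subseteq\mathcal{E}_1$, i.e. the whole spanning tree sits inside the support of $z$. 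Combining this with the previous paragraph: the spanning tree's edges are covered by the positive circuits $\mathcal{C}_k$.

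Now I would assemble the strong-connectivity argument. Each positive circuit $\mathcal{C}_k$ is, by definition, an oriented cycle whose orientation agrees with the graph, so all its vertices are mutually reachable from one another by directed paths within $\mathcal{C}_k$. Form the union $\mathcal{H}$ of all the $\mathcal{C}_k$; within each connected "block" of $\mathcal{H}$ every pair of vertices is mutually directed-reachable, because two circuits sharing even one vertex merge into a single strongly connected piece (traverse around one, cross at the shared vertex, traverse around the other). Since the edges of a spanning tree $\mathcal{T}$ all lie in $\mathcal{H}$, and $\mathcal{T}$ connects all $n$ vertices of $\mathcal{G}$, the graph $\mathcal{H}$ restricted to vertices is connected; hence $\mathcal{H}$ is a single strongly connected subgraph on all of $\mathcal{V}$. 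Therefore $\mathcal{G}\supseteq\mathcal{H}$ is strongly connected.

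The main obstacle I anticipate is the step linking "$z_i$ interior" to "$e_i$ lies on some positive circuit $\mathcal{C}_k$ with the right orientation," and in particular ruling out degenerate situations: that a tree edge could have $z_i=0$ (excluded because interior and compatible orientation force $z_i>0$), and that the decomposition into positive circuits genuinely covers the support with no sign cancellation (guaranteed by Lemma \ref{positive circuit} together with nonnegativity of all coefficients, so the supports simply add). I would also need to be careful that "positive circuit" as defined means the cyclic orientation matches the graph's edge directions, which is precisely what makes the vertices on each $\mathcal{C}_k$ mutually reachable; this is where the compatibility assumption (\ref{assumption1}) is essential, since without it $\ker B\cap\mathbb{R}^m_{\geq 0}$ need not capture the tree edges.
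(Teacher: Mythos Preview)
Your proposal is correct and follows essentially the same route as the paper: invoke the interior point condition to get $z\in\ker B\cap\mathbb{R}^m_{\geq 0}$, decompose it via Lemma~\ref{positive circuit} into positive circuits, note that the spanning tree $\mathcal{T}\subseteq\mathcal{E}_2\subseteq\mathcal{E}_1$ lies in the support of $z$ and hence in the union of those circuits, and conclude that this union is a strongly connected spanning subgraph. The paper's proof is simply the terse version of yours---it asserts directly that $\mathcal{G}_1=\{\mathcal{V},\mathcal{E}_1(z;u^-,u^+)\}$ is strongly connected, while you spell out the circuit-merging argument in detail.
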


\begin{proof}
 Let $z\in[u^-, u^+]\cap\mathbb{R}^{m}_{\geq0}$ be such that $Bz=0$. 
Then by Lemma \ref{positive circuit}, $z$ can be represented as a positive
linear combination of positive circuits. Furthermore, the set of edges along
which $z$ is a interior point of the constraint intervals contains a spanning
tree, then $\mathcal{G}$
contains a strongly connected subgraph
$\mathcal{G}_1=\{\mathcal{V},\mathcal{E}_1(z;u^-, u^+)\}$. In conclusion,
$\mathcal{G}$ is strongly connected.
\end{proof}

If $\mathcal{G}$ is strongly connected it must contain positive
cycles, and it is easy to see that in this case any cycle that is not positive
can be written as linear combination of positive circuits. Consequently,
when $\mathcal{G}$ is strongly connected, positive circuits compose a basis
of $\ker{B}$.

\medskip

Let $z\in[u^-, u^+]\cap\mathbb{R}^m_{\geq0}$ be the vector from the interior
point condition. By Lemma \ref{positive circuit}, $z$ can be represented as
\begin{equation}\label{z representation}
 z=\sum_{i=1}^{k}\alpha_i C_i\,\, \alpha_i>0
\end{equation}
where $\mathcal{C}_i$ is a positive circuit of $\mathcal{G}_1$ and
$C_i$ is vectorial representation of $\mathcal{C}_i,i=1,\ldots,k$. Denote the set of
these $k$ positive circuits as
$\tilde{\mathcal{C}}=\{\mathcal{C}_1,\mathcal{C}_2,
\cdots,\mathcal{C}_k\}$. By Lemma
\ref{weakly-strongly connected}, the graph $\mathcal{G}_1(z;u^-, u^+)\}$ can be
covered by $\tilde{\mathcal{C}}$.

Next we will explain the relation between $\tilde{\mathcal{C}}$ and $[u^-,u^+]$.
Suppose an edge $e_h$ is not overlapped in $\tilde{\mathcal{C}}$, i.e. there is
only one positive circuit in $\mathcal{C}_i\in\tilde{\mathcal{C}}$ such that
$e_h\in\mathcal{C}_i$, then clearly 
\begin{equation}
 \alpha_i\in[u^-_h,u^+_h]
\end{equation}
 where $\alpha_i$ is given as in (\ref{z representation}). However, if an edge
$e_h$ is overlapped in $\tilde{\mathcal{C}}$, without loss of generality, say
$e_h$ belongs to $\mathcal{C}_1,\mathcal{C}_2,\cdots,\mathcal{C}_d$, then we
have 
\begin{equation}
 \sum_{i=1}^{d}\alpha_i \in[u^-_h,u^+_h].
\end{equation}

\begin{lemma}\label{lower bound}
 Consider the dynamical system (\ref{closedloop-sat}) defined on the network
satisfying the interior point condition. Then 
\begin{spacing}{1.5}
(i) Along every trajectory $(x(t),x_c(t)),
t\geqslant 0,$ of (\ref{closedloop-sat}), the function 
\begin{equation}\label{lyapunov}
\begin{aligned}
V(x(t),x_c(t))&=\mathds{1}^{T} S\big(-B^{T}\frac{\partial H}{\partial
x}(x(t))-x_c(t)\,;u^-,u^+\big) \\
&+H(x(t))
\end{aligned}
\end{equation}
is bounded from below,

(ii) The trajectory $(x(t),x_c(t)), t\geqslant0,$ is bounded,

(iii) $\lim_{t\rightarrow\infty}\dot{V}(x(t),x_c(t))=0$,
\end{spacing}
\end{lemma}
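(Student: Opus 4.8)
The plan is to run a Lyapunov argument around the function $V$ of (\ref{lyapunov}), use the interior point condition to control its growth, and finish with Barbalat's lemma. First I would compute $\dot{V}$ along (\ref{closedloop-sat}). Abbreviate $\mu:=-B^{T}\frac{\partial H}{\partial x}(x)-x_c$, so that $\dot{x}=B\sat(\mu\,;u^-,u^+)$ and $\dot{x}_c=B^{T}\frac{\partial H}{\partial x}(x)$, and recall that the gradient of $x\mapsto\mathds{1}^{T}S(x\,;u^-,u^+)$ is $\sat(x\,;u^-,u^+)$. Differentiating $V$ along the flow, using the chain rule and the symmetry of the Hessian of $H$, gives
\[
\dot{V}=\big(\sat(\mu\,;u^-,u^+)\big)^{T}\Big(-B^{T}\tfrac{\partial^{2}H}{\partial x^{2}}(x)\,\dot{x}-\dot{x}_c\Big)+\tfrac{\partial^{T}H}{\partial x}(x)\,\dot{x} .
\]
The two terms containing $\frac{\partial H}{\partial x}(x)$ cancel, because $\dot{x}_c=B^{T}\frac{\partial H}{\partial x}(x)$ and the scalar $\frac{\partial^{T}H}{\partial x}(x)B\sat(\mu\,;u^-,u^+)$ equals its own transpose $\big(\sat(\mu\,;u^-,u^+)\big)^{T}B^{T}\frac{\partial H}{\partial x}(x)$. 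What remains is
\[
\dot{V}=-\dot{x}^{T}\tfrac{\partial^{2}H}{\partial x^{2}}(x)\,\dot{x}\le 0 ,
\]
since the Hessian of $H$ is positive definite. Thus $t\mapsto V(x(t),x_c(t))$ is non-increasing. (Local existence and uniqueness of trajectories are clear since the right-hand side of (\ref{closedloop-sat}) is locally Lipschitz; the a priori bound in (ii) then yields global existence.)

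To bound $V$ from below I would use the vector $z\in[u^-,u^+]\cap\mathbb{R}^{m}_{\geq0}$ of Definition \ref{interior point}, with $Bz=0$ and with the set of edges on which $z_i$ is interior containing a spanning tree $\mathcal{T}$. Introduce the scalar functions $\phi_i(s):=S(s\,;u^-_i,u^+_i)-z_i\,s=\int_0^{s}\big(\sat(y\,;u^-_i,u^+_i)-z_i\big)\,dy$. Since $z_i\in[u^-_i,u^+_i]$ the integrand is nondecreasing in $y$ and changes sign, so $\phi_i$ attains a global minimum, hence $\phi_i\geq-c_i$; and for $e_i\in\mathcal{T}$, where $u^-_i<z_i<u^+_i$, the integrand is bounded away from $0$ outside $[u^-_i,u^+_i]$, so $\phi_i(s)\geq\varepsilon_i|s|-c_i'$ with $\varepsilon_i=\min(u^+_i-z_i,\,z_i-u^-_i)>0$. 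Because $Bz=0$ we have $z^{T}\mu=-z^{T}x_c$, whence $\mathds{1}^{T}S(\mu\,;u^-,u^+)=\sum_i\phi_i(\mu_i)-z^{T}x_c$ and therefore
\[
V(x,x_c)=\sum_{i}\phi_i(\mu_i)+H(x)-z^{T}x_c .
\]
Along any trajectory $\mathds{1}^{T}x$ is conserved (since $\mathds{1}^{T}B=0$) and so is the orthogonal projection $P_{\ker B}x_c$ of $x_c$ onto $\ker B$ (since $\dot{x}_c\in\im B^{T}=(\ker B)^{\perp}$); in particular $z^{T}x_c(t)\equiv z^{T}x_c(0)$ because $z\in\ker B$. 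Together with $\sum_i\phi_i(\mu_i)\geq-\sum_ic_i$ and the boundedness from below of $H$ (immediate for the quadratic $H(x)=\tfrac12\|x\|^2$ of interest here), this proves (i).

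For (ii) I would show that $V$ is radially unbounded on the invariant affine set $\mathcal{A}=\{(x,x_c):\mathds{1}^{T}x=\mathds{1}^{T}x(0),\ P_{\ker B}x_c=P_{\ker B}x_c(0)\}$, on which $z^{T}x_c$ is constant. Take $(x^{(k)},x_c^{(k)})\in\mathcal{A}$ with $\|(x^{(k)},x_c^{(k)})\|\to\infty$. If $\|x^{(k)}\|\to\infty$ along a subsequence, then $H(x^{(k)})\to\infty$ and dominates the bounded term $\sum_i\phi_i(\mu_i^{(k)})\geq-\sum_ic_i$ and the constant $z^{T}x_c^{(k)}$. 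Otherwise $\|x^{(k)}\|$ stays bounded while $\|x_c^{(k)}\|\to\infty$, hence $\|(I-P_{\ker B})x_c^{(k)}\|\to\infty$; since $(I-P_{\ker B})x_c^{(k)}\in\im B^{T}$ and the coordinate projection onto the $n-1$ edges of $\mathcal{T}$ maps $\im B^{T}$ isomorphically onto $\mathbb{R}^{n-1}$, the tree-part satisfies $\|\mu^{(k)}|_{\mathcal{T}}\|\to\infty$ (the contributions of $B^{T}\frac{\partial H}{\partial x}(x^{(k)})$ and of $P_{\ker B}x_c^{(k)}$ being bounded), so $\sum_{e_i\in\mathcal{T}}\phi_i(\mu_i^{(k)})\geq\sum_{e_i\in\mathcal{T}}\varepsilon_i|\mu_i^{(k)}|-\sum_ic_i'\to\infty$ while $H(x^{(k)})$ and $z^{T}x_c^{(k)}$ stay bounded. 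Thus the sublevel sets of $V$ on $\mathcal{A}$ are compact, and since $V$ is non-increasing the trajectory remains in one of them, proving (ii). Finally (iii): $V$ is non-increasing and bounded below, so $V(t)\downarrow V_\infty$ and $\int_0^{\infty}(-\dot{V})\,dt<\infty$; on the bounded trajectory $\dot{x},\dot{x}_c,\mu$ and $\dot{\mu}$ are bounded and $t\mapsto\sat(\mu(t)\,;u^-,u^+)$ is Lipschitz, so a short computation shows $\dot{V}$ is uniformly continuous on $[0,\infty)$, and Barbalat's lemma gives $\dot{V}(t)\to0$ (alternatively, LaSalle's invariance principle on the compact invariant set yields the same).

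The step I expect to be the main obstacle is (i)--(ii), i.e. the growth bounds on $V$. The delicate point is that $x_c$ enters $V$ only through $\mu=-B^{T}\frac{\partial H}{\partial x}(x)-x_c$, while $x_c$ has $\dim\im B^{T}$ unconstrained directions; a priori $x_c$, hence the whole trajectory, could escape to infinity with $\mu$ and $V$ bounded. This is ruled out precisely by the \emph{spanning tree} clause of the interior point condition: it forces $z_i$ to be interior on a set of edges containing a spanning tree, so the coercive estimates $\phi_i(s)\geq\varepsilon_i|s|-c_i'$ cover that tree, and — combined with the observation that $z^{T}x_c$ is the one functional of $x_c$ that both appears in $V$ and is conserved by the dynamics — this pins down $(I-P_{\ker B})x_c$. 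Making this bookkeeping precise is the real content; the $\dot{V}$ computation and the Barbalat/LaSalle step are routine.
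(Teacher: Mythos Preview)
Your proof is correct and follows essentially the same route as the paper: the same computation $\dot V=-\dot x^{T}\frac{\partial^{2}H}{\partial x^{2}}\dot x\le 0$, the same conservation law $z^{T}x_c(t)\equiv z^{T}x_c(0)$ coming from $Bz=0$, the same use of the spanning-tree clause of the interior point condition to force boundedness of $x_c$, and a Barbalat/LaSalle step for (iii). Your packaging via the functions $\phi_i(s)=S(s;u^-_i,u^+_i)-z_is$ and the tree-coordinate isomorphism on $\im B^{T}$ is somewhat tidier than the paper's case analysis with the sets $\mathcal E_{\pm\infty}$ and its path argument on $\mathcal E_3$, but the underlying ideas coincide.
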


\begin{proof}
(i)
Since $H(x)$ is positive definite, we only need to show that the components of
$V_1(x,x_c):= S\big(-B^{T}\frac{\partial H}{\partial x}(x)-x_c\,;u^-,u^+\big)$
are bounded from below. Suppose the $i$th component of $V_1(x(t),x_c(t))$
converges
to $-\infty$, by the property of integral of saturation function, this holds if
and only if on the $i$th edge $\big(-B^{T}\frac{\partial H}{\partial
x}(x(t))-x_c(t)\big)_i\rightarrow -\infty$ and $u^{-}_{i}>0$. 

Let us define two subsets of $\mathcal{E}$
\begin{equation}
 \begin{aligned}
  \mathcal{E}_{-\infty} & = \{ e_i\in\mathcal{E} \mid \big(-B^{T}\frac{\partial
H}{\partial
x}(x(t))-x_c(t)\big)_i\rightarrow -\infty\}, \\
\mathcal{E}_{+\infty} & = \{ e_i\in\mathcal{E} \mid \big(-B^{T}\frac{\partial
H}{\partial
x}(x(t))-x_c(t)\big)_i\rightarrow +\infty\}.
 \end{aligned}
\end{equation}

When $\mathcal{E}_{-\infty}=\emptyset$, it is easy to see that $V(x(t),x_c(t))$
is bounded from below.

When $\mathcal{E}_{-\infty}\neq\emptyset$, for large enough $t$, on the edges
of $\mathcal{E}_{-\infty}$, the summation of corresponding components of
$V_1(x(t),x_c(t))$ is equal to
\begin{equation}
 \sum_{e_i\in\mathcal{E}_{-\infty}} u^-_i\big( -B^T\frac{\partial
H}{\partial x}(x(t))-x_c(t)\big)_i.
\end{equation}
up to constants which depend on the initial condition.
Similarly, on the edges of $\mathcal{E}_{+\infty}$, the summation is equal to 
\begin{equation}
 \sum_{e_i\in\mathcal{E}_{+\infty}} u^+_i\big( -B^T\frac{\partial
H}{\partial x}(x(t))-x_c(t)\big)_i.
\end{equation}

Let $z\in\ker B\cap\mathbb{R}^m_{\geq0}$ be the vector from the interior
point condition, we have 
\begin{equation}\label{sum_inputs}
 \begin{aligned}
  &0=z^TB^T\frac{\partial H}{\partial x}(x)& \\[2mm]
\Rightarrow &\sum_{e_i\in\mathcal{E}}z_ix_{c_{i}}(t)
=\sum_{e_i\in\mathcal{E}}z_ix_{c_{i}}(0)\\[2mm]
\Rightarrow &\sum_{e_i\in\mathcal{E}}z_i\big( -B^T\frac{\partial
H}{\partial x}(x(t))-x_c(t)\big)_i
=-\sum_{e_i\in\mathcal{E}}z_ix_{c_{i}}(0), \\
& \forall t>0
 \end{aligned}
\end{equation}

Then by (\ref{sum_inputs}) and the fact that $u^-_i\leq z_i\leq u^+_i, \forall
e_i\in\mathcal{E}$, we have that function (\ref{lyapunov}) is bounded from
below.

(ii) Notice that $\dot{V}=-\dot{x}^T\frac{\partial^2 H}{\partial
x^2}\dot{x}\leq0$. 

Suppose that $x(t), t\geq0$, is not bounded, then there exists a sequence
$\{t_k\}, t_k\geq0$ such that 
\begin{equation}
 \lim_{k\rightarrow\infty}\|x(t_k)\| = +\infty.
\end{equation}
Since $H(x)$ is unbounded, this implies 
\begin{equation}
 \lim_{k\rightarrow\infty} V(x(t_k),x_c(t_k))=+\infty.
\end{equation}
This is a contradiction with $\dot{V}\leq0.$

Suppose $x_c$ is unbounded, we first show that this can not happen only on the
edges of $\mathcal{E}_3(z;u^-,u^+)$. Indeed, suppose $x_c$ is
unbounded on $e_i\in\mathcal{E}_3(z;u^-,u^+)$ where $e_i\sim(x_p,x_q),$
then by the property of dynamics of $x_c$ in (\ref{closedloop-sat}), along any
positive path from $x_q$ to $x_p$, there exists at least one edge, on which
$x_c$ is unbounded, belongs to $\mathcal{E}_3(z;u^-,u^+)$. Then $\{\mathcal{V},
\mathcal{E}\setminus\mathcal{E}_3(z;u^-,u^+)\}=\{\mathcal{V},
\mathcal{E}_2(z;u^-,u^+)\}$ is not weakly connected which is a contradiction
with the fact that $\mathcal{E}_2(z;u^-,u^+)$ contains a spanning tree.

Then if $x_c$ is unbounded, there must be some edges of
$\mathcal{E}_2(z;u^-,u^+)$ on which $x_c$ is unbounded. However, if on
$e_i\in\mathcal{E}_2(z;u^-,u^+)$, there exists a
sequence $\{t_k\}, t_k\geq0$ such that
\begin{equation}
 \lim_{k\rightarrow\infty}\|x_{c_i}(t_k)\| = +\infty,
\end{equation}
then similar to (i), by using (\ref{sum_inputs}) and the fact that
$u^-_i<z_i<u^+_i, \forall e_i\in\mathcal{E}_2(z;u^-,u^+)$, we have  
\begin{equation}
 \lim_{k\rightarrow\infty} V(x(t_k),x_c(t_k))=+\infty.
\end{equation}
This is a contradiction to $\dot{V}\leq0$ again.

In conclusion, $(x,x_c)$ is bounded.

(iii)From the dynamics (\ref{closedloop-sat}) and (ii), it can be shown
that $\frac{d}{dt}(-B^{T}\frac{\partial H}{\partial x}(x)-x_c)$ is
bounded. Combining the facts that $V(x,x_c)$ is bounded from below with
$\dot{V}\leqslant0$, we have that $\lim_{t\rightarrow\infty}
\dot{V}(x(t),x_c(t))=0.$

Indeed, suppose $\dot{V}(x(t),x_c(t))$ does not converge to zero. In
other words,
there exists a real $\delta>0$ and a sequence $\{t_k\}$, satisfying
$\lim_{k\rightarrow\infty}t_k=+\infty$, such that
$\dot{V}(x(t_k),x_c(t_k))<-\delta.$
Since $\frac{d}{dt}(-B^T\frac{\partial H}{\partial x}(x)-x_c)$ is
bounded, 
then for each $k=1,2,\ldots,$ there exists a time interval $I_k$ and an
$\epsilon>0$ such that $|I_k|>\epsilon, t_k\in I_k,$ and $\forall t\in I_k,
\dot{V}(x(t),x_c(t))<-\frac{\delta}{2}$. This implies that 
\begin{equation}
\lim_{t\rightarrow \infty}
V(x(t),x_c(t))= -\infty, \nonumber
\end{equation}
which is contradicted by (i). In conclusion, $ \lim_{t\rightarrow \infty}
\dot{V}(x(t),x_c(t))=0.$
\end{proof}

We obtain our main theorem.

\begin{theorem}\label{main}
 Consider the dynamical system (\ref{closedloop-sat}) defined on a weakly
connected directed graph
with compatible constraints $[u^-, u^+]$. Then the trajectories will converge
into 
\begin{equation}
 \mathcal{E}_{\mathrm{tot}} = \{ (x,x_c) \mid \frac{\partial H}{\partial x}(x) =
\alpha \mathds{1}_n, \, B\sat(-x_c\,;u^-,u^+) = 0 \}.
\end{equation}
if and only if the network satisfies the interior point condition.
\end{theorem}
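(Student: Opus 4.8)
The plan is to prove the two implications separately: the ``if'' direction is a short invariance-principle argument built on Lemma~\ref{lower bound}, while the ``only if'' direction carries the combinatorial weight and uses the positive-circuit description of $\ker B\cap\mathbb{R}^m_{\ge0}$ (Lemma~\ref{positive circuit}). For sufficiency, assume the interior point condition holds. By Lemma~\ref{lower bound}, the function $V$ in (\ref{lyapunov}) is bounded below along the trajectory, the trajectory $(x(t),x_c(t))$ is bounded, and $\dot V\to0$. Since $\dot V=-\dot x^{\top}\frac{\partial^2 H}{\partial x^2}\dot x$ and the Hessian is positive definite, $\dot V\to0$ forces $\dot x\to0$. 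Let $\Omega$ be the $\omega$-limit set; it is nonempty, compact, invariant, and contained in $\{\dot x=0\}$. On $\Omega$ invariance gives $\dot x\equiv0$, hence $\frac{d}{dt}\dot x_c=B^{\top}\frac{\partial^2 H}{\partial x^2}\dot x\equiv0$, so $\dot x_c$ is constant along each orbit in $\Omega$; boundedness of $x_c$ makes it zero, i.e.\ $B^{\top}\frac{\partial H}{\partial x}(x)=0$, and weak connectivity ($\ker B^{\top}=\spa\mathds{1}_n$) gives $\frac{\partial H}{\partial x}(x)=\alpha\mathds{1}_n$. Substituting back into $\dot x=0$ gives $B\,\sat(-x_c\,;u^-,u^+)=0$, so $\Omega\subseteq\mathcal{E}_{\mathrm{tot}}$.

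\textbf{Necessity (contrapositive), the device and the easy case.} Suppose the interior point condition fails; I will exhibit an initial condition whose trajectory does not converge into $\mathcal{E}_{\mathrm{tot}}$. The common device is a vertex functional $\phi(x)=c^{\top}x$ that is monotone along every trajectory, since $\dot\phi=(B^{\top}c)^{\top}\sat(\cdot\,;u^-,u^+)$ and $\sat(\cdot)$ takes values in $[u^-,u^+]$. Recall $\mathds{1}^{\top}x(t)\equiv\mathds{1}^{\top}x(0)$ is conserved, and by strict convexity of $H$ there is at most one $x^{\ast}$ with $\frac{\partial H}{\partial x}(x^{\ast})=\alpha\mathds{1}_n$ and $\mathds{1}^{\top}x^{\ast}=\mathds{1}^{\top}x(0)$; hence convergence into $\mathcal{E}_{\mathrm{tot}}$ would force $x(t)\to x^{\ast}$. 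Set $Z:=\ker B\cap[u^-,u^+]\subseteq\mathbb{R}^m_{\ge0}$. If $Z=\emptyset$, then $0$ lies outside the compact convex set $\{Bw:w\in[u^-,u^+]\}$, so a separating hyperplane gives $c$ and $\varepsilon>0$ with $(B^{\top}c)^{\top}w\le-\varepsilon$ for all $w\in[u^-,u^+]$; then $\phi(x(t))\le\phi(x(0))-\varepsilon t\to-\infty$, every trajectory is unbounded, and moreover $\mathcal{E}_{\mathrm{tot}}=\emptyset$, so nothing converges into it.

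\textbf{Necessity, the hard case.} Now take $Z\ne\emptyset$ while the interior point condition still fails. Pick $z^{\ast}$ in the relative interior of $Z$; by convexity $\mathcal{E}_2(z^{\ast};u^-,u^+)$ is the largest among all $\mathcal{E}_2(z;u^-,u^+)$, $z\in Z$, and by hypothesis it does not contain a spanning tree, so $\{\mathcal{V},\mathcal{E}_2(z^{\ast};u^-,u^+)\}$ is disconnected and every edge joining two of its components is \emph{pinned} (equal to $u^-_i$ or to $u^+_i$ on all of $Z$). I claim one can choose such a cut $\mathcal{V}=\mathcal{A}\sqcup\mathcal{B}$ \emph{co-oriented}: every crossing edge from $\mathcal{A}$ to $\mathcal{B}$ pinned at $u^+$, every crossing edge from $\mathcal{B}$ to $\mathcal{A}$ pinned at $u^-$. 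Otherwise, contract each component of $\{\mathcal{V},\mathcal{E}_2(z^{\ast};u^-,u^+)\}$ and reverse the orientation of the crossing edges pinned at $u^-$; the resulting digraph would be strongly connected, hence its edge set covered by positive circuits (Lemma~\ref{positive circuit}), and perturbing $z^{\ast}$ a little along such a circuit --- rerouting inside the connected non-rigid components to stay in $\ker B$ --- would produce $z'\in Z$ with $\mathcal{E}_2(z^{\ast};u^-,u^+)\subsetneq\mathcal{E}_2(z';u^-,u^+)$, contradicting maximality. For the co-oriented cut, $\mathds{1}_{\mathcal{B}}^{\top}Bw\le\mathds{1}_{\mathcal{B}}^{\top}Bz^{\ast}=0$ for all $w\in[u^-,u^+]$, so $\phi(x):=\mathds{1}_{\mathcal{B}}^{\top}x$ is non-increasing along every trajectory. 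Starting from a state with the same coordinate sum as $x^{\ast}$ but with $\mathds{1}_{\mathcal{B}}^{\top}x(0)<\mathds{1}_{\mathcal{B}}^{\top}x^{\ast}$ (move a little mass from a vertex of $\mathcal{B}$ to a vertex of $\mathcal{A}$) keeps $\mathds{1}_{\mathcal{B}}^{\top}x(t)\le\mathds{1}_{\mathcal{B}}^{\top}x(0)<\mathds{1}_{\mathcal{B}}^{\top}x^{\ast}$ for all $t$, so $x(t)\not\to x^{\ast}$ and the trajectory does not converge into $\mathcal{E}_{\mathrm{tot}}$.

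\textbf{Anticipated main obstacle.} Sufficiency is essentially bookkeeping once Lemma~\ref{lower bound} is in hand. The real work is the necessity, and within it the construction of the co-oriented cut: converting the purely graph-theoretic failure ``$\mathcal{E}_2(z^{\ast};u^-,u^+)$ does not span'' into usable sign information about the pinned edges needs the convex geometry of the polytope $\ker B\cap[u^-,u^+]$ together with Lemma~\ref{positive circuit}, and the degenerate edges with $u^-_i=u^+_i$ (pinned for trivial reasons) will need separate handling in the perturbation step.
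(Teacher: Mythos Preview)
Your sufficiency argument is essentially the paper's: both invoke Lemma~\ref{lower bound} and an invariance principle to reach the set $\{\dot x=0\}$. Your last step --- on the compact $\omega$-limit set $x$ is constant, hence $\dot x_c$ is constant, hence zero by boundedness of $x_c$ --- is in fact shorter than the paper's, which instead tracks the auxiliary quantity $V_1(t)=\mathds 1^{T}S\bigl(-B^{T}\tfrac{\partial H}{\partial x}(\nu)(1+t)-x_c(0);u^-,u^+\bigr)$ along invariant trajectories and derives $B_i^{T}\tfrac{\partial H}{\partial x}(\nu)=0$ edge by edge on $\mathcal E_2$, finishing via the spanning-tree clause of the interior point condition.

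The necessity is where you genuinely diverge. The paper is constructive: starting from any $z^0\in Z$ it runs an iterative merging procedure on the reduced graph $\tilde{\mathcal G}(z)$ (Case~1: two parallel crossing edges pinned at opposite bounds; Case~2: a positive circuit in $\tilde{\mathcal G}(z)$ whose edges are all pinned at the same bound), terminating at a $z^*$ for which neither case applies, and then exhibits an explicit ``clustering'' trajectory --- $x_c(0)=-z^*$ and $\tfrac{\partial H}{\partial x}$ assigned a distinct constant on each component of $\mathcal G_2(z^*)$, ordered according to which bound the crossing edges hit --- along which $\dot x\equiv 0$ but $B^{T}\tfrac{\partial H}{\partial x}\neq 0$ for all $t$. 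Your route is convex-analytic: you pick $z^*$ in the relative interior of $Z$ once (so $\mathcal E_2(z^*)$ is automatically maximal), argue that failure of the interior point condition forces a co-oriented cut $(\mathcal A,\mathcal B)$ (otherwise the reoriented reduced graph is strongly connected and a positive-circuit perturbation enlarges $\mathcal E_2$, contradicting maximality), and then use the non-increasing functional $\mathds 1_{\mathcal B}^{T}x$ to obstruct convergence to the unique consensus point $x^\ast$ on the level set $\mathds 1^{T}x=\mathds 1^{T}x(0)$. Both arguments ultimately rest on Lemma~\ref{positive circuit}; the paper's buys an explicit non-consensus steady state and a finer multi-cluster picture, while yours is shorter, makes the separating-hyperplane structure transparent, and trades the iterative algorithm for a single relative-interior choice. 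Your flagged obstacle about degenerate edges $u^-_i=u^+_i$ does not arise in the paper's setting: the standing hypothesis (stated when $\mathcal U$ is introduced) is $u^-_i<u^+_i$, preserved by the preliminary reductions, so every pinned crossing edge can be unpinned and your perturbation step goes through as written.
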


\begin{proof}
 {\it Sufficiency:}
Suppose the network satisfies interior point condition with vector $z$ which
has representation as (\ref{z representation}). Consider the following function
\begin{equation}
V(x,x_c)=\mathds{1}^{T} S\big(-B^{T}\frac{\partial H}{\partial
x}(x)-x_c\,;u^-,u^+\big)+H(x)
\end{equation}
as Lyapunov function. Notice that $\dot{V}=-\dot{x}^T\frac{\partial ^2
H}{\partial x^2}\dot{x}\leq0.$

Using Lemma \ref{lower bound} and LaSalle's principle, it follows that
$(x(t),x_c(t))$ converges to the largest invariant set $\mathcal{I}$ contained
in $\{(x,x_c) \mid  \dot{V}=0\,\}.$
If a solution $(x(t),x_c(t))\in \mathcal{I}$, then $x$ is a constant
vector, denoted as $\nu$. Furthermore, $\mathcal{I}$ is given as
\begin{equation}
\begin{aligned}
 &\mathcal{I}=\{(\nu,x_c)\mid x_c=B^T\frac{\partial H}{\partial
x}(\nu)t+x_c(0),\\
&B\sat\big(-B^T\frac{\partial H}{\partial
x}(\nu)-B^T\frac{\partial H}{\partial
x}(\nu) t-x_c(0)\,;u^-,u^+\big)=0, \\
& \forall t\geq 0\}.
\end{aligned}
\end{equation}

We will prove $B^T\frac{\partial H}{\partial x}(\nu)=0$ by contradiction. 
Suppose we choose $(\nu,x_c(0))\in\mathcal{I}$, according to the definition of
invariant set, $V\big(\nu,B^{T}\frac{\partial H}{\partial x}(\nu)t+x_c(0)\big)$
is a constant along the trajectory. Furthermore, 
\begin{equation}
V_1(t):=\mathds{1}^{T} S\big(-B^{T}\frac{\partial H}{\partial
x}(\nu)-B^{T}\frac{\partial H}{\partial x}(\nu)t-x_c(0)\,;u^-,u^+\big)
\end{equation}
will be a constant. Notice that 
\begin{equation}
\begin{aligned}
 \dot{V}_1 = & -\sat^T\big(-B^{T}\frac{\partial H}{\partial
x}(\nu)-B^{T}\frac{\partial H}{\partial
x}(\nu)t-x_c(0),u^-,u^+\big) \\
& B^T\frac{\partial H}{\partial x}(\nu)
\end{aligned}
\end{equation}
We can assume that $\dot{V}_1(0)=0$.

Suppose if on an edge $e_i$, $B_i^T\frac{\partial H}{\partial
x}(\nu)>0$, the for large enough $t$
\begin{equation}
 \begin{aligned}
  u^-_i & = \sat\big(-B_i^{T}\frac{\partial H}{\partial
x}(\nu)-B_i^{T}\frac{\partial H}{\partial
x}(\nu)t-x_{c_i}(0),u^-_i,u^+_i\big) \\
& \leq \sat\big(-B_i^{T}\frac{\partial H}{\partial
x}(\nu)-x_{c_i}(0),u^-_i,u^+_i\big),
 \end{aligned}
\end{equation}
while if $B_i^T\frac{\partial H}{\partial
x}(\nu)<0$, the for large enough $t$
\begin{equation}
 \begin{aligned}
  u^+_i & = \sat\big(-B_i^{T}\frac{\partial H}{\partial
x}(\nu)-B_i^{T}\frac{\partial H}{\partial
x}(\nu)t-x_{c_i}(0),u^-_i,u^+_i\big) \\
& \geq \sat\big(-B_i^{T}\frac{\partial H}{\partial
x}(\nu)-x_{c_i}(0),u^-_i,u^+_i\big).
 \end{aligned}
\end{equation}

Furthermore, if the edge $e_i\in\mathcal{E}_2$, the above two inequalities hold
strictly, which implies that $\dot{V}_1(t)>0$ for large enough $t$. This is a
contradiction. So along all the edges of $\mathcal{E}_2$, $B_i^T\frac{\partial
H}{\partial x}(\nu)=0$.
Since $(\mathcal{V},\mathcal{E}_2)$
contains a spanning tree, $B^T\frac{\partial H}{\partial x}(\nu)=0$

{\it Necessity}
First of all, if there does not exist $z$ such that $B\sat(z; u^-, u^+)=0$, the
system (\ref{closedloop-sat}) is unstable.
Suppose now the network does not satisfy interior point condition, i.e there
exist a vector $z$ such that 
\begin{equation}\label{nece}
 Bz=0, z\in[u^-,u^+].
\end{equation}
however for any $z$ such that (\ref{nece}) holds, the $\mathcal{E}_2(z;u^-,u^+)$
does not contain a spanning tree.

For this case, we will show that the dynamical system (\ref{closedloop-sat})
will form a clustering by setting suitable initial
condition $(x(0),x_c(0))$ with $B^T\frac{\partial H}{\partial x}(x(0))\neq0$
and $\dot{x}(t)=0, t\geq0.$

Since $\mathcal{E}_2(z;u^-, u^+)\cup\mathcal{E}_3(z;u^-, u^+)=\mathcal{E},
\mathcal{E}_2(z;u^-, u^+)\cap\mathcal{E}_3(z;u^-, u^+)=\emptyset$ and
$\mathcal{E}_2(z;u^-, u^+)$ does not contain
a spanning tree, then $\mathcal{E}_3(z;u^-, u^+)$ contains a cut set. Suppose
the graph $\mathcal{G}_2(z)$ is not weakly connected and has $k$ weakly
connected components, denoted as $\mathcal{G}_2^1(z),\cdots,$
$\mathcal{G}_2^k(z)$, we can introduce a reduced graph $\tilde{\mathcal{G}}(z)$
which has $k$ vertices that each of them represents a component of
$\mathcal{G}_2(z)$,
$\mathcal{E}(\tilde{\mathcal{G}}(z))\subseteq\mathcal{E}_3(z;u^-, u^+)$ is the
set of edges connecting the components of $\mathcal{G}_2(z)$. With a slight
abuse of notation, we denote the vertices of $\tilde{\mathcal{G}}(z)$ as
$\mathcal{G}_2^1(z),\cdots,\mathcal{G}_2^k(z)$ too. This reduction is shown in
Figure \ref{simplified}.

Next we will conduct the following algorithm on $\tilde{\mathcal{G}}(z)$.

Algorithm: Initialization, find any $z^0\in\mathbb{R}^m_{\geq0}$ such that
(\ref{nece}) holds. Let $z^k$ denote the value of $z$ from the previous
iteration. For $z^k$, check if $\tilde{\mathcal{G}}(z^k)$ satisfies case 1 or
case 2 which are given below. If does, we will derive a new $z^{k+1}$ satisfying
(\ref{nece}) and repeat this step for $\tilde{\mathcal{G}}(z^{k+1})$; if not,
the algorithm stops. Notice that $\tilde{\mathcal{G}}(z^{k+1})$ has fewer
vertices than $\tilde{\mathcal{G}}(z^k)$.

Case 1: Consider the subgraph of $\tilde{\mathcal{G}}(z^k)$ given
as in Figure \ref{combined}(left), where
$\mathcal{G}^i_2(z^k),\mathcal{G}^j_2(z^k)$ are two
nodes in $\tilde{\mathcal{G}}(z^k)$ which are connected by edges $e_{l_1}$ and
$e_{l_2}$. Suppose $z^k$ on $e_{l_1}$ and
$e_{l_2}$ reach different bounds, for instance, upper bound on $e_{l_1}$ 
and lower bound on $e_{l_2}$, then we can modify $z^k$ to $z^{k+1}$
satisfying (\ref{nece}) such that $z^{k+1}$ belongs to interior of constraint
intervals on $e_{l_1}$ and $e_{l_2}$. Besides $\mathcal{G}^i_2(z^k)$ and
$\mathcal{G}^j_2(z^k)$ will merge into one node in
$\tilde{\mathcal{G}}(z^{k+1})$. Indeed, in this case there exist a closed path
of $\mathcal{G}$ composed by the edges in
$\mathcal{G}^i_2(z^k),\mathcal{G}^j_2(z^k)$, $e_{l_2}$ and reversed $e_{l_1}$,
with incidence vector denoted as
$w$, such that for small enough $\epsilon>0$,
$z^{k+1}_{l_i}\in int[u^-_{l_i},u^+_{l_i}], i=1,2$ where
$z^{k+1}=z^k+\epsilon w$. 

Case 2: Suppose along any positive circuit in $\tilde{\mathcal{G}}(z^k)$, $z^k$
reaches the same bounds, i.e.
upper bounds simultaneously or lower bounds. An example is given as in Figure
\ref{combined}(right), where $z^k$ on $e_{l_1}, e_{l_2},e_{l_3}$ reaches lower
bounds at the same time. Then similar to the previous case, there exists a
closed path of $\mathcal{G}$ composed by the edges in
$\mathcal{G}^h_2(z^k),\mathcal{G}^i_2(z^k),\mathcal{G}^j_2(z^k)$ and
$e_{l_i},i=1,2,3$ with the incidence vector denoted as $w$, such that for
small enough $\epsilon>0$, $z^{k+1}_{l_i}\in int[u^-_{l_i},u^+_{l_i}],
i=1,2,3$ where $z^{k+1}=z^k+\epsilon w$. For the case when $z^k$ on $e_{l_1},
e_{l_2},e_{l_3}$ reaches upper bounds, we could use the reversed closed path
with incidence vector $-w$.

Since there exist only a finite number of vertices of $\mathcal{G}$, the
algorithm will stop after finite steps. Let us denote the final value of $z$
as $z^*$. If the network satisfies the interior point condition, then
$\tilde{\mathcal{G}}(z^*)$ is a trivial graph with only one vertex. If not, the
graph $\tilde{\mathcal{G}}(z^*)$ satisfies: first, $z^*$ on the edges of
$\tilde{\mathcal{G}}(z^*)$ with the same starting and ending nodes reaches the
same bounds; second, along any positive circuit in $\tilde{\mathcal{G}}(z^*)$,
$z^*$ reaches upper and lower bounds simultaneously.

Finally we can set the suitable initial condition of system
(\ref{closedloop-sat}) on $\mathcal{G}$ such that $\dot{x}=0,
B^T\frac{\partial H}{\partial x}\neq0, \forall t>0$. Based on $z^*$, we can set
$x_c(0)=-z^*$. For $\frac{\partial
H}{\partial x}$, we can assign it the same value in each weakly
connected component of $\mathcal{G}_2(z^*)$. In fact, we can set it on
$\mathcal{G}^i_2(z^*)$ larger than it on $\mathcal{G}^j_2(z^*)$ if there is an
edge from $\mathcal{G}^j_2(z^*)$ to $\mathcal{G}^i_2(z^*)$ on which $z^*$
reaches lower bound. Similarly, $\frac{\partial
H}{\partial x}$ on $\mathcal{G}^i_2(z^*)$ is assigned to be
smaller than it on $\mathcal{G}^j_2(z^*)$ if $z^*$ reaches upper bound on the
edges from $\mathcal{G}^j_2(z^*)$ to $\mathcal{G}^i_2(z^*)$.
We can verify that $\dot{x}(t)=0$, but $B^T\frac{\partial H}{\partial
x}(x(t))\neq0, \forall t>0$.
\end{proof}

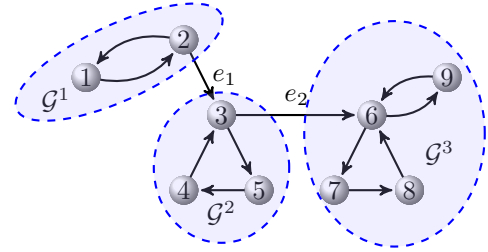
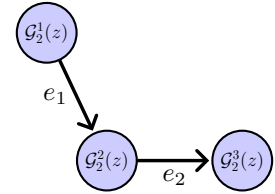
\begin{figure}
\centering
\subfigure[The whole network $\mathcal{G}$ with its weakly connected
components]{
\begin{tikzpicture}[->,>=stealth',shorten >=1pt,auto,node distance=3cm,
 thick,main
node/.style={circle,fill=blue!20,draw,font=\sffamily\Large\bfseries}]

\useasboundingbox (0,0) rectangle (5cm,4cm);

\tikzstyle{VertexStyle} = [shading         = ball,
                           ball color      = white!100!white,
                           minimum size = 20pt,%
                           inner sep       = 1pt,]
\Vertex[style={minimum
size=0.2cm,shape=circle},LabelOut=false,L=\hbox{$1$},x=0.5cm,y=2.5cm]{v1}
\Vertex[style={minimum
size=0.2cm,shape=circle},LabelOut=false,L=\hbox{$2$},x=1.8cm,y=3cm]{v2}
\Vertex[style={minimum
size=0.2cm,shape=circle},LabelOut=false,L=\hbox{$3$},x=2.3cm,y=2cm]{v3}
\Vertex[style={minimum
size=0.2cm,shape=circle},LabelOut=false,L=\hbox{$4$},x=1.8cm,y=1cm]{v4}
\Vertex[style={minimum
size=0.2cm,shape=circle},LabelOut=false,L=\hbox{$5$},x=2.8cm,y=1cm]{v5}
\Vertex[style={minimum
size=0.2cm,shape=circle},LabelOut=false,L=\hbox{$6$},x=4.3cm,y=2cm]{v6}
\Vertex[style={minimum
size=0.2cm,shape=circle},LabelOut=false,L=\hbox{$7$},x=3.8cm,y=1cm]{v7}
\Vertex[style={minimum
size=0.2cm,shape=circle},LabelOut=false,L=\hbox{$8$},x=4.8cm,y=1cm]{v8}
\Vertex[style={minimum
size=0.2cm,shape=circle},LabelOut=false,L=\hbox{$9$},x=5.3cm,y=2.5cm]{v9}
\draw
(v1) edge[bend right](v2)
(v2) edge[bend right](v1)
(v2) edge node[right]{$e_1$} (v3)
(v4) edge  (v3)
(v3) edge  (v5)
(v3) edge node[above]{$e_2$} (v6)
(v6) edge  (v7)
(v8) edge  (v6)
(v6) edge [bend right] (v9)
(v9) edge [bend right] (v6)
(v5) edge  (v4)
(v7) edge  (v8);

\node at (0.1,2.2){\text{$\mathcal{G}^1$}};
\node at (2.3,0.7){\text{$\mathcal{G}^2$}};
\node at (5.2,1.5){\text{$\mathcal{G}^3$}};

\draw[rotate around={25:(1.15cm, 2.25cm)},dashed,fill=blue!30,fill
opacity=0.2, dashed, draw=blue]
(1.15cm, 2.75cm) ellipse (1.5cm and 0.5cm);
\draw[dashed,fill=blue!30,fill
opacity=0.2, dashed, draw=blue]
(2.3cm, 1.3cm) ellipse (0.9cm and 1cm);
\draw[dashed,fill=blue!30,fill
opacity=0.2, dashed, draw=blue]
(4.6cm, 1.75cm) ellipse (1.2cm and 1.5cm);

       
\end{tikzpicture}}
 \hspace{1in}
\subfigure[Simplified graph
$\tilde{\mathcal{G}}(z)$]{\begin{tikzpicture}[->,>=stealth',shorten
>=1pt,auto,node distance=3cm,
  thick,main
node/.style={circle,scale=0.5,fill=blue!20,draw,
font=\sffamily\Large\bfseries}]
\tikzstyle{EdgeStyle}    = [thin,double= black,
                            double distance = 0.5pt]
\useasboundingbox (0,0) rectangle (3.5cm,3cm);

\node[main node](1) at (0.4cm, 2cm){$\mathcal{G}_2^1(z)$};
\node[main node](2) at (1.2cm, 0.3cm){$\mathcal{G}_2^2(z)$};
\node[main node](3) at (3cm, 0.3cm){$\mathcal{G}_2^3(z)$};
\draw
 (1) edge[->,>=angle 90,thin,double= black,double distance = 0.5pt]
node[left]{$e_1$} (2)
(2) edge[->,>=angle 90,thin,double= black,double distance = 0.5pt]
node[below]{$e_2$} (3);
\end{tikzpicture}
}
\caption{For a given $z$, the pony-shape network, given as (a), falls into three
weakly connected components after deleting $\mathcal{E}_3$ where
$e_1,e_2\in\mathcal{E}_3$. By denoting each weakly connected component as a
node, we get the simplified graph $\tilde{\mathcal{G}}(z)$, given as in
(b). In this case $\mathcal{G}_2^i(z),i=1,2,3$ represent
either a node in $\tilde{\mathcal{G}}(z)$ or a weakly component of 
$\mathcal{G}_2(z)$.}\label{simplified}
\end{figure}

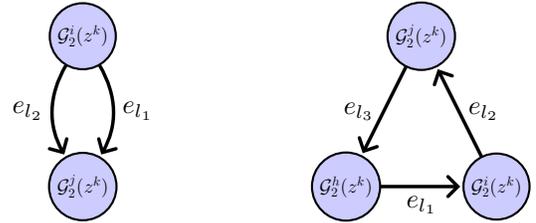
\begin{figure}
\centering
\begin{tikzpicture}[->,>=stealth',shorten >=1pt,auto,node distance=3cm,
  thick,main
node/.style={circle,scale=0.5, fill=blue!20,draw,font=\sffamily\Large\bfseries}]
\tikzstyle{EdgeStyle}    = [thin,double= black,
                            double distance = 0.5pt]
\useasboundingbox (0,0) rectangle (7cm,2.5cm);

\node[main node](1) at (1, 2.2){$\mathcal{G}^i_2(z^k)$};
\node[main node](2) at (1, 0.2){$\mathcal{G}^j_2(z^k)$};
\node[main node](3) at (4.5, 0.2){$\mathcal{G}^h_2(z^k)$};
\node[main node](4) at (6.5, 0.2){$\mathcal{G}^i_2(z^k)$};
\node[main node](5) at (5.5, 2.2){$\mathcal{G}^j_2(z^k)$};
\draw
 (3) edge[->,>=angle 90,thin,double= black,double distance = 0.5pt]
node[below]{$e_{l_1}$} (4)
(4) edge[->,>=angle 90,thin,double= black,double distance = 0.5pt]
node[right]{$e_{l_2}$} (5)
(5) edge[->,>=angle 90,thin,double= black,double
distance = 0.5pt]node[left]{$e_{l_3}$} (3)
(1) edge[->,>=angle 90,bend left,thin,double= black,double distance = 0.5pt]
node[right]{$e_{l_1}$} (2)
(1) edge[->,>=angle 90,bend right,thin,double= black,double distance = 0.5pt]
node[left]{$e_{l_2}$} (2);
\end{tikzpicture}

\caption{Two cases which are studied in proof of Theorem \ref{main}.}
\label{combined}
\end{figure}

\begin{remark}
 For any final value $z^*$ from the previous algorithm, notice that in any
positive circuit of $\mathcal{G}$, there is either no edge of
$\tilde{\mathcal{G}}(z^*)$ or at least two. 

First, we will show that for a weakly connected network satisfying Interior
Point Condition, the algorithm will not end up with a graph
$\tilde{\mathcal{G}}(z^*)$ with more than one node for any initial condition
$z^0$. Indeed, suppose $\tilde{\mathcal{G}}(z^*)$ has more than one node, by
Lemma \ref{positive circuit}, $z-z^*$ can be represented by a linear combination
of positive circuits where $z$ is a vector from Interior Point Condition.
However, since $\tilde{\mathcal{G}}(z^*)$ does not satisfy Case 2, for any
linear combination of positive circuits, denoted as $w$, $z^*+w\notin[u^-,u^+]$.
This is contradicted to $z\in[u^-,u^+]$.

Next we will show that for a weakly connected network the algorithm will not
produce different $\tilde{\mathcal{G}}(z^*_1)$ and $\tilde{\mathcal{G}}(z^*_2)$
by using different initial conditions $z^0_1,z^0_2$. Indeed suppose there exist
two nodes $x_i,x_j$ which belong to the same node of
$\tilde{\mathcal{G}}(z^*_1)$ but two different nodes of
$\tilde{\mathcal{G}}(z^*_2)$, denoted as $\mathcal{G}_2^i(z^*_2)$ and
$\mathcal{G}_2^j(z^*_2)$. By Lemma \ref{positive circuit}, $z^*_1-z^*_2$ can be
represented as linear combination of positive circuits. However for any
linear combination of positive circuits, denoted as $w$,
$z^*_2+w\notin[u^-,u^+]$. This is contradicted to $z^*_1\in[u^-,u^+]$.

In conclusion, Interior Point Condition is a property of the network which does
not depend the initial condition of the algorithm.
\end{remark}

\section{Controlling to arbitrary steady states in the admissible set}

Notice that the first equation in system (\ref{system1}) implies that
\begin{equation}
 \mathds{1}^T\dot{x}=0
\end{equation}
if the disturbance satisfies the matching condition (\ref{matching}).

In this section, instead of driving the state to consensus, we will make
the state converge to any desirable point $x^*$ in the admissible set, which is
defined as
\begin{equation}
 \mathcal{A}=\{x \mid \mathds{1}^Tx=\mathds{1}^Tx(0)\}.
\end{equation}
We will achieve this by modifying our controller
(\ref{unconstrained-controller})

In \cite{Blanchini00}, the authors construct a discontinuous controller with
the information of desirable point to complete this task. Here we consider the
following controller (with gain matrix $R=I$)
\begin{equation}\label{controller-arbitrary}
 \begin{aligned}
  \dot{x}_c & = B^T\frac{\partial H}{\partial x}(x-x^*)\\
u & = -B^T\frac{\partial H}{\partial x}(x-x^*)-\frac{\partial H_c}{\partial
x_c}
 \end{aligned}
\end{equation}
which can measure the {\it difference} between the current states and the
desirable ones. If the input is not saturated, then the closed-loop system is
given as
 \begin{equation}\label{closedloop-arbitrary}
 \begin{bmatrix} \dot{x} \\[2mm] \dot{x}_c \end{bmatrix} =
\begin{bmatrix} -BB^T & -B \\[2mm] B^T & 0 \end{bmatrix}
\begin{bmatrix} \frac{\partial H}{\partial x}(x-x^*) \\[2mm] \frac{\partial
H_c}{\partial x_c}(x_c) \end{bmatrix} +
\begin{bmatrix} E \\[2mm] 0 \end{bmatrix} d,
\end{equation}

\begin{theorem}
 Suppose $H(x)$ is strictly convex and has its minimum at the origin. Consider
a desirable state $x^*$ from admissible set $\mathcal{A}$. Assume the
disturbance satisfies the matching condition (\ref{matching}). Then the
trajectories of system
(\ref{closedloop-arbitrary}) will converge to 
\begin{equation}
 \mathcal{E}_{tot}=\{(x^*, \bar{x}_c) \mid B\frac{\partial
H_c}{\partial \bar{x}_c}(x_c)=E\bar{d} \}
\end{equation}
if and only if the graph is weakly connected.
\end{theorem}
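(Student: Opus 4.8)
The plan is to mimic the structure of the proof of Theorem \ref{main}: build an appropriate Lyapunov/energy function for the closed-loop system (\ref{closedloop-arbitrary}), show boundedness of trajectories and apply LaSalle's invariance principle, and then analyze the largest invariant set inside $\{\dot V = 0\}$ using the kernel of the incidence matrix $B$. The key difference from Theorem \ref{main} is that now we are shifting the state by $x^*$ and the disturbance satisfies the matching condition (\ref{matching}), so the natural Lyapunov candidate is
\begin{equation}
V(x,x_c) = H(x - x^*) - H(0) + \tfrac12\|x_c - \bar x_c\|^2,
\end{equation}
where $\bar x_c$ is the (not necessarily unique) controller state satisfying $B\bar x_c = B\frac{\partial H_c}{\partial x_c}(\bar x_c) = E\bar d$. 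Since $H$ is strictly convex with minimum at the origin, $V$ is nonnegative and radially unbounded in the shifted coordinates $(x - x^*, x_c - \bar x_c)$, hence bounded from below and with bounded sublevel sets.

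\textbf{Sufficiency.} First I would compute $\dot V$ along (\ref{closedloop-arbitrary}). Writing $z := \frac{\partial H}{\partial x}(x - x^*)$ and using $B\bar x_c = E\bar d$, the $\dot x$ equation becomes $\dot x = -BB^T z - B x_c + E\bar d = -BB^T z - B(x_c - \bar x_c)$, while $\dot x_c - 0 = B^T z$. A direct calculation (using $\frac{d}{dt}H(x-x^*) = z^T\dot x$) gives
\begin{equation}
\dot V = z^T\dot x + (x_c - \bar x_c)^T \dot x_c = -z^T BB^T z - z^T B(x_c - \bar x_c) + (x_c - \bar x_c)^T B^T z = -\|B^T z\|^2 \leq 0.
\end{equation}
Boundedness of $(x,x_c)$ then follows because $V$ has bounded sublevel sets and $\dot V \leq 0$; in particular $x - x^*$ stays in a compact set, and one checks as in Lemma \ref{lower bound}(iii) that $\ddot V$ (equivalently $\frac{d}{dt}(B^T z)$) is bounded, so $\dot V \to 0$, hence $B^T z(t) \to 0$. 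By LaSalle, $(x,x_c)$ converges to the largest invariant set $\mathcal I$ in $\{B^T\frac{\partial H}{\partial x}(x - x^*) = 0\}$. On $\mathcal I$ we have $\dot x_c = B^T z = 0$, so $x_c$ is constant, and therefore $\dot x = -B x_c + E\bar d = -B(x_c - \bar x_c)$ is constant; boundedness of $x$ forces $\dot x = 0$, i.e. $B(x_c - \bar x_c) = 0$. Now $B^T z = 0$ together with weak connectedness gives $z = \frac{\partial H}{\partial x}(x - x^*) = \lambda\mathds 1_n$ for some scalar $\lambda$ (since $\ker B^T = \spa\mathds 1_n$). The final step is to show $\lambda = 0$, hence $x = x^*$: integrating the first equation of (\ref{system1}) under the matching condition gives $\mathds 1^T x(t) = \mathds 1^T x(0)$ for all $t$, and since $x^* \in \mathcal A$ we also have $\mathds 1^T x^* = \mathds 1^T x(0)$; combined with $\frac{\partial H}{\partial x}(x - x^*) = \lambda\mathds 1_n$ and strict convexity of $H$ (so $\frac{\partial H}{\partial x}$ is injective, and $\frac{\partial H}{\partial x}(0) = 0$), one concludes $x - x^*$ is the unique point with constant gradient $\lambda\mathds 1_n$ lying on the hyperplane $\mathds 1^T(x - x^*) = 0$; evaluating at $\lambda = 0$ matches, and a convexity/monotonicity argument (or simply noting $\mathds 1^T(x-x^*)=0$ forces $\lambda = 0$ when $H$ is e.g. quadratic-like) rules out $\lambda \neq 0$. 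This yields $x = x^*$ and $B(x_c - \bar x_c) = 0$, i.e. convergence to $\mathcal E_{tot}$.

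\textbf{Necessity.} If the graph is not weakly connected, then $\ker B^T$ contains vectors other than multiples of $\mathds 1_n$; one builds an initial condition for which $\frac{\partial H}{\partial x}(x - x^*)$ is a nonzero element of $\ker B^T$ that is non-constant across the weakly connected components, so that $\dot x = 0$ and $\dot x_c = B^T z = 0$ hold identically, giving an equilibrium with $x \neq x^*$ — hence no convergence to $\mathcal E_{tot}$. \textbf{The main obstacle} I expect is the last step of sufficiency: pinning down that the constant $\lambda$ must be zero, i.e. that the combination of $\frac{\partial H}{\partial x}(x - x^*) \in \spa\mathds 1_n$ and the conservation law $\mathds 1^T x = \mathds 1^T x^*$ forces $x = x^*$ rather than merely $x - x^*$ lying on a one-parameter curve; this requires using strict convexity of $H$ carefully (the gradient map restricted to the hyperplane is injective and its image meets $\spa\mathds 1_n$ only at $\lambda = 0$), and handling the fact that $\bar x_c$ — and hence the $x_c$-component of $\mathcal E_{tot}$ — is only determined up to $\ker B$.
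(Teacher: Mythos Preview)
Your proposal is correct and follows essentially the same route as the paper. Your Lyapunov candidate coincides (up to the additive constant $-H(0)$) with the paper's shifted Hamiltonian $H^*(x,x_c)=H(x-x^*)+H_c(x_c)-H_c(\bar x_c)-\frac{\partial^T H_c}{\partial x_c}(\bar x_c)(x_c-\bar x_c)$, since $H_c(x_c)=\tfrac12\|x_c\|^2$ makes the Bregman remainder equal to $\tfrac12\|x_c-\bar x_c\|^2$; the derivative computation, LaSalle step, and use of $\ker B^T=\spa\mathds 1_n$ are identical to the paper's.

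Regarding the ``main obstacle'' you flag: it is not a real gap, and the paper dispatches it in one line. The clean argument is strict monotonicity of the gradient: for $y:=x-x^*\neq 0$ and $\nabla H(0)=0$ one has $\langle \nabla H(y)-\nabla H(0),\,y-0\rangle>0$, i.e.\ $\langle \lambda\mathds 1_n,\,y\rangle=\lambda\,\mathds 1_n^T y>0$; but the conservation law gives $\mathds 1_n^T y=0$, forcing $y=0$ and hence $\lambda=0$. This is exactly the ``convexity/monotonicity argument'' you allude to, so you may state it directly rather than hedging. Your extra step showing $B(x_c-\bar x_c)=0$ on the invariant set (via $\dot x_c=0\Rightarrow\dot x$ constant $\Rightarrow\dot x=0$ by boundedness) is a detail the paper sweeps into ``the rest follows the standard way,'' and your necessity argument matches the paper's.
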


\begin{proof}

 {\it Sufficiency}:
Consider the modified Hamiltonian 
\begin{equation}
 H^*(x,x_c)=H(x-x*)+H_c(x_c)-H_c(\bar{x}_c)-\frac{\partial^T
H_c}{\partial \bar{x}_c}(x_c-\bar{x}_c)
\end{equation}
as Lyapunov function. Then we have 
\begin{equation}
  \frac{d H^*}{dt}  = -\frac{\partial^T
H}{\partial x}(x-x^*)BB^T\frac{\partial H}{\partial x}(x-x^*) \leq 0
\end{equation}
By LaSalle's principle, the trajectories will converge into the largest
invariant set, denoted as $\mathcal{I}$, in $\{(x,x_c)\mid B^T\frac{\partial
H}{\partial x}(x-x^*)=0\}$. By the fact that the network is weakly connected,
$\frac{\partial
H}{\partial x}(x-x^*)=\alpha\mathds{1}$ for some $\alpha\in\mathbb{R}$. Since
$H$ is strictly convex, we can write $x-x^* = \frac{\partial
H ^{-1}}{\partial x}(\alpha\mathds{1})$. Furthermore, because $H$ gets minimum
at origin and $\mathds{1}^T(x-x^*)=0$, so $\alpha=0$ and $x=x^*$. The rest of
proof follows the standard way.

{\it Necessary}: If the network is not weakly connected, the controller
(\ref{controller-arbitrary}) can only drive the states into the admissible set
corresponding to each weakly connected components. Here end the proof
\end{proof}

By denoting $x-x^*$ as $\tilde{x}$, we can write the system
(\ref{closedloop-arbitrary}) as 
\begin{equation}
 \begin{bmatrix} \dot{\tilde{x}} \\[2mm] \dot{x}_c \end{bmatrix} =
\begin{bmatrix} -BB^T & -B \\[2mm] B^T & 0 \end{bmatrix}
\begin{bmatrix} \frac{\partial H}{\partial \tilde{x}}(\tilde{x}) \\[2mm]
\frac{\partial
H_c}{\partial x_c}(x_c) \end{bmatrix} +
\begin{bmatrix} E \\[2mm] 0 \end{bmatrix} d.
\end{equation}
After absorbing the disturbance into constraint intervals as
(\ref{disturbance_in_const}), the corresponding saturated case can be written as
\begin{equation}\label{closeloop-sat-arbitrary}
 \begin{aligned}
\dot{\tilde{x}} & =  B\sat\big(-B^T\frac{\partial H}{\partial
\tilde{x}}(\tilde{x})-x_c\,;u^-,u^+\big),
\\[2mm]
\dot{x}_c & =  B^T\frac{\partial H}{\partial \tilde{x}}(\tilde{x}).
\end{aligned}
\end{equation}

By the fact that $\mathds{1}^T\tilde{x}=0$, and $H$ is strictly convex and
reach minimum at origin, we can show that $\frac{\partial H}{\partial
\tilde{x}}(\tilde{x})\rightarrow \alpha\mathds{1}\Rightarrow \alpha =0$ and
$\tilde{x}\rightarrow 0.$ This is a direct application of Theorem \ref{main}.

\begin{corollary}
 Suppose the Hamiltonian $H$ is strictly convex and reach minimum at
origin. For any $x^*\in\mathcal{A}$, consider
dynamical system (\ref{closeloop-sat-arbitrary}) defined on a directed graph
with compatible constraints $[u^-, u^+]$, then the trajectories will converge
into 
\begin{equation}
 \mathcal{E}_{\mathrm{tot}} = \{ (0,x_c) \mid  B\sat(-x_c\,;u^-,u^+) = 0 \}.
\end{equation}
if and only if the network satisfies interior point condition.
\end{corollary}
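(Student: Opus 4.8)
The plan is to deduce this corollary directly from Theorem~\ref{main} by recognizing that system (\ref{closeloop-sat-arbitrary}) is exactly of the form (\ref{closedloop-sat}) with $x$ replaced by $\tilde{x}$ and $H$ replaced by the same Hamiltonian acting on $\tilde{x}$. Since the graph here need not be weakly connected a priori, I would first note that the necessity direction forces weak connectedness: if $\mathcal{G}$ is not weakly connected, then on each weakly connected component the argument applies separately, but more to the point, the ``only if'' part of Theorem~\ref{main} combined with the extra structural failure shows divergence (or at best convergence to a component-wise consensus that is not the single point $0$). Thus without loss of generality I may assume $\mathcal{G}$ weakly connected, and apply Theorem~\ref{main} to obtain that trajectories converge into $\mathcal{E}_{\mathrm{tot}}' = \{(\tilde{x},x_c)\mid \frac{\partial H}{\partial \tilde{x}}(\tilde{x}) = \alpha\mathds{1}_n,\ B\sat(-x_c\,;u^-,u^+)=0\}$ if and only if the interior point condition holds.

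The second step is to pin down $\alpha$ and $\tilde{x}$. The key observation, already flagged in the paragraph preceding the corollary, is the conserved quantity $\mathds{1}^T\tilde{x}(t) = \mathds{1}^T\tilde{x}(0) = \mathds{1}^T(x(0)-x^*) = 0$, using $x^*\in\mathcal{A}$. On the limiting invariant set we have $\frac{\partial H}{\partial \tilde{x}}(\tilde{x}) = \alpha\mathds{1}_n$; strict convexity of $H$ makes $\frac{\partial H}{\partial \tilde{x}}$ a bijection, so $\tilde{x} = \left(\frac{\partial H}{\partial \tilde{x}}\right)^{-1}(\alpha\mathds{1}_n)$ is the unique point with constant gradient $\alpha\mathds{1}_n$. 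Because $H$ attains its minimum at the origin, $\alpha = 0$ gives $\tilde{x}=0$. To exclude $\alpha\neq 0$: pair the gradient equation with $\mathds{1}$, or rather, use that the map $\alpha\mapsto \mathds{1}^T\left(\frac{\partial H}{\partial\tilde{x}}\right)^{-1}(\alpha\mathds{1}_n)$ is strictly monotincreasing (a consequence of positive definiteness of the Hessian), hence injective, and vanishes at $\alpha=0$; since the conserved constraint says $\mathds{1}^T\tilde{x}=0$, we must have $\alpha=0$. Therefore $\tilde{x}\to 0$, i.e. $x\to x^*$, and the limit set collapses to $\{(0,x_c)\mid B\sat(-x_c\,;u^-,u^+)=0\}$ as claimed.

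For the ``only if'' direction of the corollary, I would invoke the necessity part of Theorem~\ref{main}: if the interior point condition fails, the same construction of an initial condition with $\dot{\tilde{x}}=0$ but $B^T\frac{\partial H}{\partial\tilde{x}}(\tilde{x})\neq 0$ produces a trajectory that stays at a clustered (non-consensus) equilibrium for the $\tilde{x}$-dynamics; translating back through $x = \tilde{x}+x^*$ shows the corresponding trajectory of (\ref{closeloop-sat-arbitrary}) does not converge to $\{(0,x_c)\mid B\sat(-x_c;u^-,u^+)=0\}$, since $\tilde{x}$ is frozen at a nonzero constant and the clustered value of $\frac{\partial H}{\partial\tilde{x}}$ is not $\alpha\mathds{1}$ with $\alpha$ forcing $\tilde{x}=0$.

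I expect the only genuinely delicate point to be the monotonicity argument that isolates $\alpha = 0$ from the conservation law $\mathds{1}^T\tilde{x}=0$; everything else is a transcription of Theorem~\ref{main} and its proof with $x\rightsquigarrow\tilde{x}$. One should also double-check that ``reaches minimum at origin'' together with strict convexity is exactly what licenses both $\left(\frac{\partial H}{\partial\tilde{x}}\right)^{-1}(0)=0$ and the strict monotonicity; since the paper already assumes the Hessian of $H$ is positive definite everywhere, this is immediate. No new Lyapunov analysis is needed: the corollary is a corollary in the strict sense, and the proof can be kept to a few lines once the reduction to Theorem~\ref{main} and the role of the conserved quantity $\mathds{1}^T\tilde{x}$ are made explicit.
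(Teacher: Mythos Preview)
Your proposal is correct and follows essentially the same approach as the paper: the paper states explicitly that the corollary ``is a direct application of Theorem~\ref{main}'' together with the observation that $\mathds{1}^T\tilde{x}=0$, strict convexity of $H$, and the minimum at the origin force $\alpha=0$ and $\tilde{x}\to 0$. Your treatment is in fact more thorough than the paper's, since you spell out the monotonicity argument for isolating $\alpha=0$ and address the weak connectedness hypothesis of Theorem~\ref{main}, whereas the paper leaves both implicit.
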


\section{CONCLUSIONS}
We have discussed a basic model of dynamical distribution networks where the flows through the edges are generated by distributed PI controllers. The main part of this
paper focuses on the case where flow constraints are present. A key ingredient in this analysis is the construction of a $C^1$ Lyapunov function. After making the orientation and constraint interval compatible, we derived a sufficient and necessary condition, which depends on the graphic structure of the network and constraint intervals, for asymptotic load balancing with any given network and constraints. By modified PI controller, the states on nodes can be driven to any desirable state of admissible set.

An obvious open problem is how to put constraint
on storage variables on vertices. This is currently under investigation. Many other questions can be addressed in this framework. For example, what is happening if
the in/outflows are not assumed to be constant, but are e.g. periodic
functions of time; see already \cite{depersis}.

\addtolength{\textheight}{-12cm}   



\bibliographystyle{IEEEtran} 
\bibliography{ifacconf}

\end{document}